\newtheorem{theorem}{Theorem}[section]
\newtheorem*{theorem*}{Theorem}
\newtheorem{lemma}[theorem]{Lemma}
\newtheorem{corollary}[theorem]{Corollary}
\theoremstyle{remark}
\newtheorem{remark}[theorem]{Remark}
\theoremstyle{definition}
\newtheorem*{notation}{Notation}
\numberwithin{equation}{section}
\newcommand{\Pdn}{\mathcal{P}_{d,n}}
\newcommand{\Pdone}{\mathcal{P}_{d,1}}
\newcommand{\re}{\mathbb{R}}
\newcommand{\abs}[1]{\left\vert#1\right\vert}
\begin{document}

\title[Singular oscillatory integrals on $\re^n$]{Singular oscillatory integrals on $\re^n$}

\author[M. Papadimitrakis]{M. Papadimitrakis}
\email{papadim@math.uoc.gr}
\address{Department of Mathematics, University of Crete, Knossos Avenue 71409, Iraklio--Crete, Greece}

\author[I. R. Parissis]{I. R. Parissis}
\email{ioannis.parissis@gmail.com}
\address{Institutionen f\"or Matematik, Kungliga Tekniska H\"ogskolan, SE 100 44, Stockholm, SWEDEN.}
\thanks{}

\subjclass[2000]{Primary 42B20; Secondary 26D05}

%\date{November 28, 2006.}

%\commby{}

\begin{abstract}Let $\Pdn$ denote the space of all real polynomials of degree at most $d$ on $\re^n$. We prove a new estimate for the logarithmic measure of the sublevel set of a polynomial $P\in \Pdone$. Using this estimate, we prove that
\begin{eqnarray*}
\sup_ {P\in\Pdn} \bigg|p.v.\int_{\re^n} {e^{iP(x)}\frac{\Omega(x/|x|)}{|x|^n}dx}\bigg|\leq c  \log d\,(\|\Omega\|_{L\log L(S^{n-1})}+1),
\end{eqnarray*} 
for some absolute positive constant c and every function $\Omega$ with zero mean value on the unit sphere $S^{n-1}$. This improves a result of Stein from \cite{S}.
\end{abstract}

\maketitle

\section{Introduction}
We denote by $\Pdn$ the vector space of all real polynomials of degree at most $d$ in $\re^n$. Let $K$ be a $-n$ homogeneous function on $\re^n$, that is, 
\begin{equation}
K(x)=\frac{\Omega(x/|x|) }{|x|^n},
\end{equation}
where $\Omega$ is some function on the unit sphere $S^{n-1}$. Consider the principal value integral
$$I_n(P)=\bigg|p.v.\int_{\re^n} e^{iP(x)}K(x)dx\bigg|.$$ 
Stein has proved in \cite{S} that if $\Omega$ has zero mean value on the unit sphere, then
\begin{equation}\label{eq1.2}
|I_n(P)|\leq c_d \|\Omega \|_{L^\infty(S^{n-1})},
\end{equation}
for some constant $c_d$ depending on $d$. We wish to obtain sharp estimates of the form \eqref{eq1.2}. The one dimensional analogue, namely the estimate
\begin{equation}
\bigg|p.v.\int_{\re} e^{iP(x)}\frac{dx}{x}\bigg| \leq c \log d,
\end{equation}
which was proved in \cite{P}, suggests that the constant $c_d$ in (\ref{eq1.2}) could be replaced by $c\log d$ for some absolute positive constant $c$. The fact that this is indeed the case is the content of the following theorem.
\begin{theorem}\label{main} Suppose that $K(x)=\Omega(x/|x|)/|x|^n$ where $\Omega$ has zero mean value on the unit sphere $S^{n-1}$. There exists an absolute positive constant $c$ such that
$$\sup_{P\in\Pdn}\bigg|p.v.\int_{\re^n} e^{iP(x)}K(x)dx\bigg|\leq c \log d \ (\|\Omega\|_{L\log L(S^{n-1})}+1).$$ 
\end{theorem}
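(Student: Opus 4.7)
My first step is to pass to polar coordinates $x=r\theta$, $r>0$, $\theta\in S^{n-1}$, which converts the integral into
\[
I\;=\;\lim_{\epsilon\to 0^+,\,R\to\infty}\int_{S^{n-1}}\Omega(\theta)\int_{\epsilon}^{R} e^{iP(r\theta)}\,\frac{dr}{r}\,d\sigma(\theta),
\]
because $dx/|x|^{n}$ reduces to $dr/r\,d\sigma(\theta)$. For each fixed $\theta$ the phase $Q_\theta(r):=P(r\theta)$ is a polynomial of degree at most $d$ in the single variable $r$, so one-dimensional tools apply fibrewise.

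To exploit the antipodal symmetry I would split $\Omega=\Omega_o+\Omega_e$ into its odd and even parts under $\theta\mapsto-\theta$; both inherit zero mean from $\Omega$. Symmetrizing the $r$-integral and sending $\epsilon\to 0$, $R\to\infty$ collapses the odd contribution to
\[
I_o\;=\;\tfrac{1}{2}\int_{S^{n-1}}\Omega_o(\theta)\,\Bigl[p.v.\int_{\re}e^{iQ_\theta(r)}\,\frac{dr}{r}\Bigr]\,d\sigma(\theta).
\]
Here the one-dimensional result of \cite{P} bounds the bracket by $c\log d$ uniformly in $\theta$, so $|I_o|\le c\log d\,\|\Omega\|_{L^{1}(S^{n-1})}$, which is already stronger than the stated conclusion.

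\textbf{Main obstacle.} The substantive work lies in the even contribution $I_e$, where the analogous symmetrization produces $\int_{\epsilon}^{R}[e^{iQ_\theta(r)}+e^{iQ_\theta(-r)}]\,dr/r$; this diverges at both endpoints for a fixed $\theta$, and its convergence after integration in $\theta$ depends essentially on $\int_{S^{n-1}}\Omega_e=0$. Here I would invoke the new logarithmic sublevel-set estimate for $\Pdone$ promised in the abstract. Dyadically decomposing the positive $r$-axis according to $\{r:|Q_\theta(r)|\sim 2^{k}\}$, I would estimate the small-$|Q_\theta|$ levels by combining the new sublevel-set inequality with the trivial bound $|e^{iQ_\theta}-1|\lesssim|Q_\theta|$, and treat the large-$|Q_\theta|$ levels by van der Corput style integration by parts against $dr/r$ using $Q_\theta'$. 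The key point is that the new inequality is engineered so that the $d$-dependence collapses to $\log d$ rather than a power of $d$. The mean-zero property of $\Omega_e$ would then be used to absorb the logarithmic divergences at $0$ and $\infty$, while the dyadic accumulation across the levels $k$ exchanges $L^{1}(S^{n-1})$ for the Orlicz norm $L\log L(S^{n-1})$. This last step is precisely where the $L\log L$ hypothesis is forced and what ultimately replaces the $L^{\infty}$ bound of Stein; making the levelwise estimates align so that the final sum converts cleanly into $\|\Omega\|_{L\log L(S^{n-1})}$, with the $\log d$ appearing only as an overall factor, is the delicate point of the argument.
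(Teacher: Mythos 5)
Your reduction to polar coordinates and the disposal of the odd part of $\Omega$ via the one--dimensional result are fine (the latter is exactly Remark \ref{odd}), but the treatment of the even part — which is the entire content of the theorem — has two genuine gaps. First, you never explain how the fiberwise bound coming from the logarithmic measure lemma is to be integrated over the sphere. After normalizing so that $\max_{d/2<j\le d}\|P_j\|_{L^\infty(S^{n-1})}=1$, the lemma applied to the fiber polynomial $r\mapsto P(r\theta)$ yields a bound of the shape $1+\frac1d\log^+\bigl(d/\max_{d/2<j\le d} j|P_j(\theta)|\bigr)$, and the quantity $\max_j|P_j(\theta)|$ can be arbitrarily small (indeed vanish) on a nontrivial subset of $S^{n-1}$, so this bound is unbounded in $\theta$. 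The paper's resolution is Corollary \ref{polweight} (a consequence of the Carbery--Wright sublevel inequality), namely $\int_{S^{n-1}}\|P_{j_o}\|_{L^\infty}^{1/2j_o}|P_{j_o}(\theta)|^{-1/2j_o}\,d\sigma(\theta)\le c$, combined with Young's inequality to pair $\log\bigl(\|P_{j_o}\|_{L^\infty}^{1/2j_o}/|P_{j_o}(\theta)|^{1/2j_o}\bigr)$ against $|\Omega(\theta)|$. That pairing is precisely what forces the $L\log L$ hypothesis; your attribution of the $L\log L$ norm to "dyadic accumulation across the levels $k$" points at the wrong mechanism, and without an exponential-integrability statement for $\log(1/|P_{j_o}|)$ on the sphere your levelwise estimates cannot be summed in $\theta$.

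Second, your plan contains no device for producing the factor $\log d$ or for controlling the region $0<r<1$. The logarithmic measure lemma only governs $\{r\ge 1\}$ and only sees the coefficients of degree $>d/2$; the mean-zero property of $\Omega$ by itself does not tame the divergence at $r=0$ fiber by fiber, since the low-degree coefficients $P_j(\theta)$, $j\le d/2$, are completely uncontrolled by the normalization. The paper handles this by a degree-halving induction: on $(0,1)$ one replaces $P$ by its low-degree part $Q=\sum_{j\le d/2}P_j$ at the cost of an $L^1(S^{n-1})$ error (since $|e^{iP}-e^{iQ}|\le\sum_{j>d/2}m_jr^j$ there), obtaining $C_d\le C_{d/2}+c(\|\Omega\|_{L\log L}+1)$; iterating $\log_2 d$ times produces the $\log d$, and the mean-zero hypothesis enters only at the base case $C_1$. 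Your sketch asserts that "the $d$-dependence collapses to $\log d$" but supplies no argument; a single application of the sublevel estimate gives a bounded contribution per scale, and without the recursion (or an equivalent telescoping over degrees) there is no source for the logarithm in $d$ at all.
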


\begin{remark}\label{odd} Suppose that $K(x)=\Omega(x/|x|)/|x|^n$ where the function $\Omega$ is odd on the unit sphere. It is an immediate consequence of the one-dimensional result that
$$\sup_{P\in\Pdn}\bigg|p.v.\int_{\re^n} e^{iP(x)}K(x)dx \bigg| \leq c \log d \  \|\Omega\|_{L^1 (S^{n-1})}$$
for some absolute positive constant $c$.
\end{remark}

The main ingredient of the proof of Theorem \ref{main} is an estimate for the logarithmic measure of the sublevel set of a real polynomial in one dimension. This is a lemma of independent interest which we now state.

\begin{lemma}[The logarithmic measure lemma]\label{logarithmic} Let $P(x)=\sum_{k=0} ^d b_k x^k$ be a real valued polynomial of degree at most $d$, $\alpha>0$ and $M=\max\{|b_k|:\frac{d}{2}< k \leq d\}$. If $E=\{x\geq1:|P(x)|\leq \alpha\}$, then
$$\int_E \frac{dx}{x} \leq c \min \bigg(\bigg(\frac{\alpha}{M}\bigg)^\frac{1}{d}, 1+\frac{1}{d}\log^+ \frac{\alpha}{M}\bigg),$$
where $c$ is an absolute positive constant.
\end{lemma}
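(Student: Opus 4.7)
I would prove the lemma by a single dyadic decomposition argument, combining the Remez inequality with a Chebyshev-type coefficient bound to handle both bounds in the minimum simultaneously. Let $k^* \in (d/2, d]$ be an index with $|b_{k^*}| = M$, and decompose $E = \bigcup_{j \geq 0} E_j$ with $E_j = E \cap I_j$, $I_j = [2^j, 2^{j+1}]$.

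The central technical input is the following coefficient bound: for any real polynomial $Q(y) = \sum_k c_k y^k$ of degree at most $d$ on an interval $[a, 2a]$ with $a > 0$, one has
\[
|c_k|\, a^k \leq C^d \, \|Q\|_{L^\infty([a, 2a])}
\]
for an absolute constant $C$. This follows from rescaling to $[1,2]$ and expressing individual coefficients as bounded linear functionals in a Chebyshev basis. Applied to $P$ on $I_j$ with $k = k^*$ (and $a = 2^j$), it yields $\|P\|_{L^\infty(I_j)} \geq C^{-d} M \cdot 2^{jk^*}$. Combining with the Remez inequality $|\{x \in I : |P(x)| \leq \alpha\}| \leq c_0 |I| (\alpha / \|P\|_{L^\infty(I)})^{1/d}$ (valid with $c_0$ absolute), I would then derive the per-interval estimate
\[
\int_{E_j} \frac{dx}{x} \leq \frac{|E_j|}{2^j} \leq c_0 C \, (\alpha/M)^{1/d}\, 2^{-jk^*/d}.
\]
The crucial point is that the exponential factor $C^d$ introduced by the coefficient bound is exactly washed out by the $1/d$-th root in the Remez estimate, leaving only the absolute constant $C$.

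Summing over $j\geq 0$ and using $k^*/d > 1/2$, the geometric series $\sum_j 2^{-jk^*/d}$ converges uniformly in $d$ to an absolute constant, producing the first bound $\int_E dx/x \leq c(\alpha/M)^{1/d}$. For the second bound, I would take the minimum of the per-interval estimate with the trivial bound $\int_{E_j} dx/x \leq \log 2$: the per-interval estimate exceeds $\log 2$ only for $j \leq j_0$ with $j_0 \lesssim 1 + (1/k^*)\log^+(\alpha/M) \leq 1 + (2/d)\log^+(\alpha/M)$, so at most $O(1 + d^{-1}\log^+(\alpha/M))$ indices contribute the trivial $\log 2$, while the remaining indices sum geometrically to a constant.

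Main obstacle: the delicate step is establishing the coefficient bound $|b_{k^*}|\, 2^{jk^*} \leq C^d \|P\|_{L^\infty(I_j)}$ with an absolute base $C$, since the low-order coefficients (indices $k \leq d/2$) are uncontrolled and could in principle create cancellations on $I_j$. The hypothesis $k^* > d/2$ enters in two places: it ensures the coefficient bound is being applied to the "upper half" of indices, and it is exactly the threshold that makes the geometric series $\sum_j 2^{-jk^*/d}$ convergent uniformly in $d$ — without this gap, summation over $j$ would cost a factor growing in $d$ and destroy the final estimate.
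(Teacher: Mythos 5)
Your proof is correct, but it takes a genuinely different route from the paper. The paper adapts Vinogradov's interpolation argument directly to logarithmic measure: it selects $d+1$ points of $E$ that equipartition $\int_E \frac{dx}{x}$, so that consecutive points satisfy $x_{j+1}\ge t x_j$ with $t=e^{\mu/d}$, and then runs Lagrange interpolation to bound the top-half coefficients, reducing everything to a careful lower bound on the product $(t^j-1)\cdots(t-1)(1-\frac1t)\cdots(1-\frac{1}{t^{d-j}})$; the two bounds in the minimum come out of a case analysis on the size of $t^d$. You instead localize to dyadic blocks $[2^j,2^{j+1}]$, where logarithmic and Lebesgue measure are comparable, and on each block combine a Remez/Chebyshev sublevel-set estimate with the exponential coefficient bound $|b_{k^*}|\,2^{jk^*}\le C^d\|P\|_{L^\infty(I_j)}$; your key observations — that the $C^d$ loss is exactly absorbed by the $1/d$-th root, and that $k^*>d/2$ makes $\sum_j 2^{-jk^*/d}$ converge uniformly in $d$ — are both correct, and your worry about cancellation from the low-order coefficients is unfounded since the coefficient bound controls each coefficient individually. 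Your treatment of the logarithmic bound (capping each block at $\log 2$ and counting the $O(1+d^{-1}\log^+(\alpha/M))$ blocks where the cap binds) is also sound. Your approach is more modular and arguably more transparent; note that the sublevel estimate you need is, after rescaling, exactly the $n=1$, $q=\infty$ case of the Carbery--Wright theorem already quoted in the paper, so both black boxes are standard. The paper's argument is more self-contained and elementary, using only symmetric functions and direct estimation, at the cost of a considerably more delicate computation.
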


Lemma \ref{logarithmic} should be compared to the following variation of a classical result of Vinogradov which can be found in \cite{V}:

\begin{lemma}\label{vinogradov} Let $P(x)=\sum_{k=0} ^d b_k x^k$ be a real valued polynomial of degree at most $d$, $\alpha>0$ and $ M_r=\max\{|b_k|:r\leq k \leq d\}$. Let $1<R$. Then
$$|\{x\in[1,R]:|P(x)|\leq \alpha \}| \leq c R^{1-\frac{r}{d}}  \frac{\alpha^\frac{1}{d}}{M_r ^\frac{1}{d}} ,$$
where $c$ is an absolute positive constant.
\end{lemma}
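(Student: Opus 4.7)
My plan is to bound $M_r$ from above via Lagrange interpolation at $d+1$ well-separated nodes in $E := \{x \in [1,R] : |P(x)| \le \alpha\}$, and then to solve for $\mu := |E|$ (the case $\mu = 0$ is trivial). Fix $k_0 \in \{r, \ldots, d\}$ with $|b_{k_0}| = M_r$; we may assume $M_r > 0$. The set $E$ is closed by continuity of $|P|$, so an iterative pigeonhole on $f(x) := |E \cap (-\infty, x]|$ produces points $x_0 < x_1 < \cdots < x_d$ in $E$ with $|E \cap (x_i, x_{i+1}]| = \mu/(d+1)$. In particular $x_{i+1} - x_i \ge \mu/(d+1)$, so $|x_i - x_j| \ge |i-j|\,\mu/(d+1)$, and $|P(x_i)| \le \alpha$ at every node.

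Since $\deg P \le d$, Lagrange interpolation gives $P = \sum_{i=0}^d P(x_i)\,\ell_i$ with $\ell_i(x) = \prod_{j \ne i}(x - x_j)/(x_i - x_j)$. Expanding the numerator in the monomial basis, the coefficient of $x^{k_0}$ in $\prod_{j \ne i}(x - x_j)$ is $(-1)^{d-k_0}\,e_{d-k_0}(\{x_j\}_{j \ne i})$, which the triangle inequality and $x_j \in [1,R]$ bound in absolute value by $\binom{d}{d-k_0}R^{d-k_0}$. The node spacing gives $\prod_{j \ne i}|x_i - x_j| \ge i!(d-i)!\,(\mu/(d+1))^d$. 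Extracting the $x^{k_0}$-coefficient of $P$ and using $\sum_{i=0}^d 1/(i!(d-i)!) = 2^d/d!$ then yields
\begin{equation*}
M_r = |b_{k_0}| \;\le\; \sum_{i=0}^d |P(x_i)|\,|\ell_{i,k_0}| \;\le\; \frac{\alpha\,R^{d-k_0}\,(2(d+1))^d}{\mu^d\,k_0!\,(d-k_0)!}.
\end{equation*}

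Rearranging gives $\mu \le [2(d+1)/(k_0!(d-k_0)!)^{1/d}]\,R^{1-k_0/d}(\alpha/M_r)^{1/d}$. Since $k_0 \ge r$ and $R \ge 1$ we have $R^{1-k_0/d} \le R^{1-r/d}$; and the elementary Stirling bound $n! \ge (n/e)^n$ gives $(k!(d-k)!)^{1/d} \ge d/(2e)$ uniformly for $0 \le k \le d$ (the minimum, at $k = d/2$, already saturates this). Combining, $\mu \le 8e\,R^{1-r/d}(\alpha/M_r)^{1/d}$, which is the lemma with $c = 8e$. The main subtlety is that the three $d$-dependent factors $\binom{d}{d-k_0}$, $R^{d-k_0}$, and $((d+1)/\mu)^d$ produced by the Lagrange/spacing bound are individually very large, but their combined $d$-th root cancels exactly against the Stirling lower bound $(k!(d-k)!)^{1/d} \gtrsim d$, leaving an absolute constant.
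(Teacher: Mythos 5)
Your proof is correct: the equal-measure splitting of $E$ into $d+1$ well-separated nodes, the spacing bound $|x_i-x_j|\ge |i-j|\,\mu/(d+1)$, the Lagrange-coefficient estimate via elementary symmetric functions bounded by $\binom{d}{d-k_0}R^{d-k_0}$, and the final cancellation against $(k_0!(d-k_0)!)^{1/d}\ge d/(2e)$ all check out, yielding the stated inequality with $c=8e$. This is essentially the same approach the paper relies on: it does not prove this lemma itself (it cites Vinogradov), and its own proof of Lemma \ref{logarithmic} in Section \ref{s.prooflog} is precisely this interpolation scheme, with the Lebesgue measure of $E$ replaced by the logarithmic measure.
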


The estimates above depend on the length of the interval $[1,R]$ in all cases but the one where $r=d$. The dependence on $R$ is sharp as can be seen by a scaling argument.

When $r=d$ we get 

\begin{equation}\label{eq1.4}|\{x\in[1,R]:|P(x)|\leq \alpha \}| \leq c \frac{\alpha^\frac{1}{d}}{|b_d| ^\frac{1}{d}}.
\end{equation}

The last inequality corresponds to the following more general result about sublevel sets which was proved in \cite{AKC}:

\begin{lemma}\label{genphase} Let $\phi$ be a $C^k$ function on the interval $[1,R]$ for some $k\geq 1$ and $R>1$. Suppose that $|\phi^{(k)}(x)|\geq M$ on $[1,R]$. Then
$$|\{x\in[1,R]:|\phi(x)|\leq \alpha \}| \leq c k \frac{\alpha^\frac{1}{k}}{M ^\frac{1}{k}} ,$$
where $c$ is an absolute positive constant.
\end{lemma}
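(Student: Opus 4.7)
The plan is to proceed by induction on the order $k$ of the derivative. For the base case $k=1$, continuity together with $|\phi'(x)|\geq M$ forces $\phi'$ to have constant sign on $[1,R]$, so $\phi$ is strictly monotonic; its sublevel set $\{|\phi|\leq\alpha\}$ is then an interval on which $\phi$ varies by at most $2\alpha$, giving measure at most $2\alpha/M$.

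For the inductive step, assume the lemma holds in order $k-1$. I would introduce a threshold $\beta>0$, to be optimized, and split
\begin{equation*}
E:=\{x\in[1,R]:|\phi(x)|\leq\alpha\}=\bigl(E\cap\{|\phi'|\leq\beta\}\bigr)\cup\bigl(E\cap\{|\phi'|>\beta\}\bigr).
\end{equation*}
Since $\phi'$ is $C^{k-1}$ with $|(\phi')^{(k-1)}|=|\phi^{(k)}|\geq M$, the inductive hypothesis applied to $\phi'$ with parameter $\beta$ bounds the first piece by $c(k-1)(\beta/M)^{1/(k-1)}$.

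For the second piece I would use Rolle's theorem: because $|\phi^{(k)}|\geq M>0$, the derivative $\phi^{(k)}$ has no zeros on $[1,R]$, and an easy downward induction gives that $\phi^{(j)}$ has at most $k-j$ zeros. In particular $\phi'$ has at most $k-1$ zeros, so $\phi$ decomposes into at most $k$ maximal monotone intervals $I_1,\dots,I_k$. On each such $I_j$, monotonicity makes $E\cap I_j$ a single interval on which the variation of $\phi$ is at most $2\alpha$, hence
\begin{equation*}
\beta\,|E\cap I_j\cap\{|\phi'|>\beta\}|\leq\int_{E\cap I_j}|\phi'(x)|\,dx\leq 2\alpha,
\end{equation*}
and summing over $j$ gives $|E\cap\{|\phi'|>\beta\}|\leq 2k\alpha/\beta$. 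Combining the two estimates yields
\begin{equation*}
|E|\leq c(k-1)\left(\frac{\beta}{M}\right)^{\frac{1}{k-1}}+\frac{2k\alpha}{\beta},
\end{equation*}
and balancing the two terms by taking $\beta\sim(k\alpha/c)^{(k-1)/k}M^{1/k}$ produces the desired $|E|\leq Ck\,(\alpha/M)^{1/k}$.

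The main delicacy I anticipate is keeping the constant $C$ absolute through every induction step: the optimization introduces correction factors of the form $(k/C)^{1/k}$ and $C^{(k-1)/k}$, both of which remain bounded as $k\to\infty$. Consequently a single sufficiently large absolute $C$, chosen at the outset, absorbs these factors and closes the induction; verifying this quantitative comparison is the only nontrivial bookkeeping the argument requires.
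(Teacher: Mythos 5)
The paper does not actually prove this lemma --- it quotes it from \cite{AKC} --- so there is no internal proof to compare against; judged on its own terms, your induction is the standard one and every individual step (the base case, the application of the inductive hypothesis to $\phi'$, the Rolle count of at most $k$ monotonicity intervals, and the Chebyshev bound $|E\cap\{|\phi'|>\beta\}|\leq 2k\alpha/\beta$) is correct. The gap is exactly in the step you flag as ``the only nontrivial bookkeeping'': the induction does \emph{not} close with a constant that is linear in $k$. Writing the inductive bound as $|E|\leq A_{k-1}(\beta/M)^{1/(k-1)}+2k\alpha/\beta$ and minimizing over $\beta$ gives precisely
\[
A_k=A_{k-1}^{\frac{k-1}{k}}\bigl(2k(k-1)\bigr)^{\frac1k}\tfrac{k}{k-1},
\]
so if $A_{k-1}=C(k-1)$ then $A_k=Ck\,(2k/C)^{1/k}$, which exceeds $Ck$ as soon as $k>C/2$; no absolute $C$ survives. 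Although each correction factor $(2k/C)^{1/k}$ tends to $1$, setting $b_k=k\log A_k$ shows $b_k-b_{k-1}=\log(2k(k-1))+k\log\frac{k}{k-1}\geq 2\log(k-1)$, whence $b_k\gtrsim 2\log((k-1)!)$ and $A_k\asymp k^2$. So your argument proves $|E|\leq ck^2(\alpha/M)^{1/k}$, a genuinely weaker statement than the lemma's $ck(\alpha/M)^{1/k}$.

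To get the linear constant you should abandon the induction for the divided-difference (Lagrange interpolation) argument --- the very technique this paper uses to prove its Lemma \ref{logarithmic}. If $|E|=\mu$, choose $x_0<\cdots<x_k$ in $E$ with $|E\cap[x_i,x_j]|=(j-i)\mu/k$, so that $|x_j-x_i|\geq|j-i|\mu/k$. The mean value theorem for divided differences gives a $\xi$ with
\[
\frac{M}{k!}\leq\left|\frac{\phi^{(k)}(\xi)}{k!}\right|=\left|\sum_{j=0}^{k}\frac{\phi(x_j)}{\prod_{i\neq j}(x_j-x_i)}\right|\leq\frac{\alpha}{(\mu/k)^{k}}\sum_{j=0}^{k}\frac{1}{j!\,(k-j)!}=\frac{2^{k}\alpha}{k!\,(\mu/k)^{k}},
\]
hence $\mu\leq 2k(\alpha/M)^{1/k}$, which is the lemma with $c=2$.
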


Observe that inequality (\ref{eq1.4}) can be deduced by Lemma \ref{genphase} by taking $k=d$ derivatives of the phase function $\phi(x)=P(x)$.

In case $n=1$ the ``linear" part $(\frac aM)^{\frac 1d}$ of the estimate of $ \int_E\frac 1x dx $ in Lemma \ref{logarithmic} is enough for the proof of Theorem \ref{main}. In fact, the author in \cite{P} used Lemma \ref{vinogradov} in some appropriate way to prove the above "linear" estimate of Lemma \ref{logarithmic}.

  In case $n>1$ the ``logarithmic" part of the estimate of $\int_E\frac 1x dx$ is essential in the proof of Theorem \ref{main} as can easily be seen by examining the argument therein.

     The structure of the rest of this work is as follows. In section \ref{s.prelim} we state some preliminary results. In section \ref{s.prooflog} we present the proof of Lemma \ref{logarithmic} and section \ref{s.proof} contains the proof of Theorem \ref{main}. Finally in section \ref{s.1d} we give a proof of Theorem \ref{main} in case $n=1$ which uses (the "linear" estimate in) Lemma \ref{logarithmic} and not Lemma \ref{vinogradov} and which is thus simpler than the proof appearing in \cite{P}.

\begin{notation} We will use the letter $c$ to denote an absolute positive constant which might change even in the same line of text.
\end{notation}
%%%%%%%%%%%%%%%%%%%%%%%%%%%%%%%%%%%%%%%%%%%%%%%%%%%%%%%%%%%%%%%%%%%%%%%%%%%%%%%%%%%%%%%%%%%%%%%%%%%%%
\section{Preliminary Results}\label{s.prelim}
As is usually the case when one deals with oscillatory integrals, a key Lemma is the classical van der Corput Lemma.
\begin{lemma}[van der Corput]\label{vander}
Let $\phi:[a,b]\rightarrow\re$ be a $C^1$ function and suppose that $|\phi^\prime(t)|\geq 1$ for all $t\in[a,b]$ and $\phi^{\prime}$ changes monotonicity $N$ times in $[a,b]$. Then, for every $\lambda \in \re$,

$$\bigg| \int _a ^b e^{i\lambda \phi(x)} dx \bigg | \leq \frac{cN}{|\lambda|}$$ 
where $c$ is an absolute constant independent of a,b and $\phi$.
\end{lemma}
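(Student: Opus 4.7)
The plan is to reduce the statement to the standard van der Corput first-derivative bound by cutting $[a,b]$ into maximal intervals on which $\phi'$ is monotonic, and then applying an integration by parts on each piece. Since $\phi'$ changes monotonicity $N$ times, there exist points $a = a_0 < a_1 < \cdots < a_{N+1} = b$ such that $\phi'$ is monotonic on each subinterval $[a_j,a_{j+1}]$; accordingly, I would write
$$\int_a^b e^{i\lambda \phi(x)}\,dx = \sum_{j=0}^{N} \int_{a_j}^{a_{j+1}} e^{i\lambda \phi(x)}\,dx,$$
and estimate each summand separately.

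On a fixed monotonic piece $I_j = [a_j,a_{j+1}]$, I would write $e^{i\lambda\phi(x)} = \frac{1}{i\lambda \phi'(x)} \frac{d}{dx} e^{i\lambda\phi(x)}$ and integrate by parts to obtain
$$\int_{I_j} e^{i\lambda\phi(x)}\,dx = \left[\frac{e^{i\lambda\phi(x)}}{i\lambda \phi'(x)}\right]_{a_j}^{a_{j+1}} - \frac{1}{i\lambda}\int_{I_j} e^{i\lambda\phi(x)} \frac{d}{dx}\!\left(\frac{1}{\phi'(x)}\right) dx.$$
The hypothesis $|\phi'| \geq 1$ forces the boundary term to be at most $2/|\lambda|$. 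For the remaining integral, the key observation is that $1/\phi'$ is monotonic on $I_j$ (this is exactly why I decomposed along the monotonicity changes of $\phi'$), so $\frac{d}{dx}(1/\phi')$ has constant sign there. Consequently
$$\int_{I_j} \left|\frac{d}{dx}\!\left(\frac{1}{\phi'(x)}\right)\right| dx = \left|\frac{1}{\phi'(a_{j+1})} - \frac{1}{\phi'(a_j)}\right| \leq 2,$$
again by $|\phi'|\geq 1$. Combining these pieces, each subinterval contributes at most $4/|\lambda|$, and summing over the $N+1$ monotonic intervals yields the desired bound $cN/|\lambda|$ (absorbing the $+1$ into the constant, under the usual convention $N\geq 1$; if $\phi'$ is globally monotonic one replaces $N$ by $1$).

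There is no real obstacle in this argument; the only delicate point is making sure to use the monotonicity of $1/\phi'$ on each piece so that the total variation of $1/\phi'$ collapses to a boundary difference rather than needing a pointwise bound on $\phi''$. This is precisely what the decomposition into $N+1$ monotonic pieces buys us, and it is the reason the dependence on the number of sign changes appears linearly in the final estimate.
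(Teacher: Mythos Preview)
Your argument is correct and is precisely the ``simple integration by parts'' that the paper alludes to as its entire proof; the decomposition into $N+1$ monotonic pieces followed by the standard first-derivative van der Corput estimate on each piece is exactly the intended route. The only cosmetic point is that $\phi$ is assumed merely $C^1$, so strictly speaking one should phrase the integration by parts in Riemann--Stieltjes form (using that $1/\phi'$ is continuous and of bounded variation on each $I_j$) rather than writing $\frac{d}{dx}(1/\phi')$, but this does not affect the substance of your proof.
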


The proof of Lemma \ref{vander} is a simple integration by parts.

We will also need a precise estimate for the Lebesgue measure of the sublevel set of a polynomial on $\re^n$.  
\begin{theorem}[Carbery,Wright]\label{carberakos} Suppose that $K\subset\re^n$ is a convex body of volume $1$ and $P\in\Pdn$. Let $1\leq q \leq \infty$. Then,
$$|\{x\in K:|P(x)|\leq \alpha\}|\leq c \min(qd,n) \alpha^\frac{1}{d} \|P\|_{L^q(K)} ^{-\frac{1}{d}}.$$
\end{theorem}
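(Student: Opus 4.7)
The plan is to prove the two halves of the $\min(qd,n)$ bound separately, since they rely on different inputs. By homogeneity I first normalize $\|P\|_{L^q(K)}=1$, so the claim reduces to
\[
|\{x\in K:|P(x)|\leq\alpha\}|\leq c\min(qd,n)\,\alpha^{1/d}.
\]

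For the factor $n$, the natural tool is the multivariate Remez--Brudnyi--Ganzburg inequality: for a polynomial of degree at most $d$ on a convex body $K$ with $|K|=1$ and any measurable $E\subset K$,
\[
\sup_K|P|\leq T_d\!\left(\frac{1+(1-|E|)^{1/n}}{1-(1-|E|)^{1/n}}\right)\sup_E|P|,
\]
where $T_d$ is the Chebyshev polynomial of the first kind. Applying this with $E=\{|P|\leq\alpha\}$ and expanding the Chebyshev factor for small $|E|$ yields $|E|\leq c\,n\,\alpha^{1/d}\|P\|_{L^\infty(K)}^{-1/d}$, and since on a unit-volume set $\|P\|_{L^\infty(K)}\geq\|P\|_{L^q(K)}$, this produces the desired bound with constant linear in $n$.

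For the factor $qd$, I would slice $K$ by parallel lines. By John's theorem (whose cost is absorbed into the absolute constant $c$) one may assume $K$ has bounded eccentricity, so $K$ foliates into a family of parallel segments of controlled length. On each segment $\ell$ the restriction of $P$ is a univariate polynomial $p_\ell$ of degree $\leq d$, and a Remez/Vinogradov estimate on an interval gives
\[
|\{t\in\ell:|p_\ell(t)|\leq\alpha\}|\leq c\,d\,\alpha^{1/d}\,\|p_\ell\|_{L^q(\ell)}^{-1/d}.
\]
Integrating over the family of lines via Fubini and H\"older, together with the normalization $\|P\|_{L^q(K)}=1$, delivers the factor $qd$; the extra $q$ enters when passing between $L^\infty$ and $L^q$ on a one-dimensional slice via a Nikolskii-type inequality.

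The main obstacle is the $qd$ half. The $n$ bound falls out almost immediately once the multivariate Remez inequality is available, but the slicing reduction for $qd$ requires either a careful choice of direction or an averaging argument, and one must combine the one-dimensional estimates without losing track of the normalized $L^q(K)$ constraint. Keeping the $q$-dependence sharp through the Nikolskii step, and preventing a dimensional blowup in the integration over lines, are the technical heart of the argument.
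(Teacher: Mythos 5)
First, note that the paper does not prove Theorem \ref{carberakos} at all: it is quoted verbatim from Carbery and Wright \cite{CW}, so there is no internal proof to compare yours against. Judged on its own merits, your argument for the factor $n$ is correct and essentially complete: the Brudnyi--Ganzburg multivariate Remez inequality with $E=\{x\in K:|P(x)|\leq\alpha\}$, together with $T_d(x)\leq(2x)^d$ and $1-(1-s)^{1/n}\geq s/n$, gives $\|P\|_{L^\infty(K)}\leq(4n/|E|)^d\alpha$, hence $|E|\leq 4n\,\alpha^{1/d}\|P\|_{L^\infty(K)}^{-1/d}\leq 4n\,\alpha^{1/d}\|P\|_{L^q(K)}^{-1/d}$ since $|K|=1$. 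This is in fact the only case the paper uses (Corollary \ref{polweight} invokes the theorem with $q=\infty$ and the bound $cn$), so for the purposes of this paper your first half already suffices and is more elementary than citing \cite{CW}.

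The $qd$ half, however, has a genuine gap precisely at the step you wave through. After slicing $K$ into parallel segments $\ell_y$ and applying a one-dimensional estimate on each, you must bound
$\int \abs{E\cap\ell_y}\,dy \leq c\,\alpha^{1/d}\int \|p_{\ell_y}\|_{L^q(\ell_y)}^{-1/d}\,dy$
from above by $c\,qd\,\alpha^{1/d}\|P\|_{L^q(K)}^{-1/d}$. Because the exponent $-1/d$ is negative, H\"older and Fubini run in the wrong direction here: knowing $\int_K|P|^q=1$ gives an upper bound on $\int\|p_{\ell_y}\|_{L^q(\ell_y)}^{q}\,dy$, which yields no upper bound on the integral of a negative power. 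One needs a reverse-H\"older or distributional inequality for the function $y\mapsto\|p_{\ell_y}\|_{L^q(\ell_y)}$, and this is exactly where the actual Carbery--Wright proof is nontrivial: it proceeds by induction on dimension using the log-concavity (via Brunn--Minkowski) of the weights arising from slicing a convex body, together with a sharp one-dimensional lemma for polynomials against such weights. Separately, your attribution of the factor $qd$ to a Nikolskii step is off: the one-dimensional Nikolskii constant enters as $(cd^2)^{1/(qd)}\leq c$ for $q\geq1$, i.e.\ it is absorbed into the absolute constant, so in one dimension there is no factor $qd$ at all; the $qd$ in the genuine theorem is produced by the dimensional induction, not by passing between $L^\infty$ and $L^q$ on a single slice.
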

This is a consequence of a more general Theorem of Carbery and Wright and can be found in \cite{CW}.

\begin{corollary} \label{polweight}Let $P$ be a real homogeneous polynomial of degree $k$ on $\re^n$. Then
\begin{equation}
\int_{S^{n-1}}\frac{\|P\| ^\frac{1}{2k} _{L^\infty(S^{n-1})}}{|P(x^\prime)|^\frac{1}{2k}}d\sigma_{n-1}(x^\prime)\leq c.
\end{equation}
\end{corollary}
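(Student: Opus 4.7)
The plan is to transfer the Carbery--Wright bound of Theorem~\ref{carberakos} (stated for a convex body) to a sub-level set estimate on the sphere via polar coordinates, and then to integrate this estimate against the layer-cake decomposition of $\abs{P}^{-1/(2k)}$. By the invariance of the claimed inequality under the rescaling $P\mapsto\lambda P$, I may assume $\|P\|_{L^\infty(S^{n-1})}=1$; the corollary then reduces to
$$\int_{S^{n-1}}\abs{P(x')}^{-1/(2k)}\,d\sigma_{n-1}(x')\le c.$$

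First I would let $B\subset\re^n$ be the closed ball centered at the origin of radius $R_0$ chosen so that $\abs{B}=1$, so that Theorem~\ref{carberakos} applies. Since $P$ is $k$-homogeneous, $\|P\|_{L^\infty(B)}=R_0^k$, and choosing $q=\infty$ (for which $\min(qk,n)=n$) the Carbery--Wright estimate yields
$$\abs{\{x\in B:\abs{P(x)}\le\alpha\}}\le c\,\alpha^{1/k}$$
for every $\alpha>0$, where the $n$-dependent factors have been absorbed into $c$. To transfer this to the sphere, set $E_t:=\{x'\in S^{n-1}:\abs{P(x')}\le t\}$; since $\abs{P(rx')}=r^k\abs{P(x')}$, the set $\{rx':x'\in E_t,\ 0\le r\le R_0\}$ is contained in $\{x\in B:\abs{P(x)}\le R_0^k t\}$, and computing volumes in polar coordinates gives
$$\frac{R_0^n}{n}\sigma_{n-1}(E_t)\le c\,(R_0^k t)^{1/k},$$
so $\sigma_{n-1}(E_t)\le c\,t^{1/k}$ for every $t>0$.

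Finally I would apply layer cake. Because $\abs{P}\le 1$ on $S^{n-1}$, the function $\abs{P}^{-1/(2k)}$ is bounded below by $1$, and $\{\abs{P}^{-1/(2k)}>s\}=E_{s^{-2k}}$; hence
$$\int_{S^{n-1}}\abs{P}^{-1/(2k)}\,d\sigma_{n-1}=\sigma_{n-1}(S^{n-1})+\int_1^\infty\sigma_{n-1}(E_{s^{-2k}})\,ds\le c+c\int_1^\infty s^{-2}\,ds\le c.$$
The only step requiring any real care is the polar-coordinate passage from the ball to the sphere, but $k$-homogeneity of $P$ makes this essentially mechanical. The crucial quantitative point is that the exponent $1/(2k)$ in the corollary is exactly \emph{half} of the exponent $1/k$ produced by Carbery--Wright, which is precisely what converts $\sigma_{n-1}(E_{s^{-2k}})\le c\,s^{-2}$ into the integrable tail needed to close the argument.
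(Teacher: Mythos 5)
Your route is essentially the paper's: Carbery--Wright on the volume-one ball, homogeneity to pass between ball and sphere, and a layer-cake integration with the exponent $\tfrac{1}{2k}$ providing the integrable tail $s^{-2}$. The only real difference is packaging: you transfer the sublevel-set bound to the sphere (via the conical sets) and do layer cake there, while the paper bounds $\int_B\abs{P}^{-\epsilon}dx$ directly on the ball by splitting the distribution-function integral at a parameter $\lambda$ and optimizing, and only then uses polar coordinates with $\epsilon=\tfrac{1}{2k}$. These are equivalent in substance.

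The one step you cannot wave away is the phrase ``the $n$-dependent factors have been absorbed into $c$.'' By the paper's convention, the $c$ in the corollary is an absolute constant, and this dimension-freeness is exactly what is needed in the proof of Theorem \ref{main}, whose constant is claimed to be independent of $n$ as well as $d$; so the uniformity in $n$ is the whole nontrivial content left after Carbery--Wright. If you track your own constants, the Carbery--Wright factor is $cn\|P\|_{L^\infty(B)}^{-1/k}=cnR_0^{-1}$, and the polar-coordinate transfer gives $\sigma_{n-1}(E_t)\leq c\,n^2R_0^{-n}\,t^{1/k}$, where $R_0^{-n}=\pi^{n/2}/\Gamma(\tfrac n2+1)$ is the volume of the unit ball. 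Since this volume decays super-exponentially, $n^2R_0^{-n}$ (and likewise $\sigma_{n-1}(S^{n-1})=2\pi^{n/2}/\Gamma(\tfrac n2)$, which you also tacitly bound by $c$) is indeed dominated by an absolute constant, so your argument does close -- but you must make this verification explicitly rather than absorb it; the paper devotes the final lines of its proof (the estimate $cn^{3/2}\pi^{n/2}/\Gamma(\tfrac n2+1)\leq c$) to precisely this point.
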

\begin{proof}[Proof of Corollary \ref{polweight}] Let $B=B(0,\rho)$ be the ball of volume $1$ on $\re^n$. For $\epsilon <\frac{1}{k}$ and some $\lambda>0$ to be defined later, we have
\begin{eqnarray*}
\int_B |P(x)|^{-\epsilon}dx&=& \int_0 ^\infty |\{x\in B:|P(x)|^{-\epsilon}\geq \alpha\}|d\alpha \\
&\leq& \lambda + \int_\lambda ^\infty |\{x\in B:|P(x)|< \alpha^{-\frac{1}{\epsilon}}\}|d\alpha\\
&\leq& \lambda +c n \|P\|_{L^\infty(B)} ^{-\frac{1}{k}} \frac{\lambda ^{-\frac{1}{k\epsilon}+1}}{\frac{1}{k\epsilon}-1},
\end{eqnarray*}
using Theorem \ref{carberakos}. Optimizing in $\lambda$ we get
$$\int_B |P(x)|^{-\epsilon}dx\leq \bigg(cn\frac{k\epsilon}{1-k\epsilon}\bigg)^{k\epsilon}\|P\|_{L^\infty(B)} ^{-\epsilon} .$$
Using polar coordinates and setting $\epsilon=\frac{1}{2k}<\frac{1}{k}$, we then get
\begin{eqnarray*}
\|P\|_{L^\infty(S^{n-1})} ^\frac{1}{2k} \int_{S^{n-1}} |P(x^\prime)|^{-\frac{1}{2k}}d\sigma_{n-1}(x^\prime)  &\leq& c \frac{n^\frac{3}{2}}{\rho^{n}}=c\frac{n^\frac{3}{2}\pi^\frac{n}{2}}{\Gamma(\frac{n}{2}+1)}\\
&\leq& c\frac{n^\frac{3}{2}(e\pi)^\frac{n}{2}}{(\frac{n}{2}+1)^\frac{n+1}{2}}\leq c,
\end{eqnarray*}
which completes the proof.
\end{proof}

%%%%%%%%%%%%%%%%%%%%%%%%%%%%%%%%%%%%%%%%%%%%%%%%%%%%%%%%%%%%%%%%%%%%%%%%%%%%%%%%%%%%%%%%%%%%%%%%%%%%%
\section{The logarithmic measure lemma}\label{s.prooflog}
The proof of Lemma \ref{logarithmic} is motivated by an argument of Vinogradov from \cite{V}, used to estimate the \emph{Lebesgue} measure of the sublevel set of a polynomial in a bounded interval. We fix a polynomial $P(x)=\sum_{k=0} ^d b_k x^k$ and look at the set $E=\{x\geq1:|P(x)|\leq \alpha\}$.  Note that by replacing $\alpha$ with $\alpha M$ in the statement of the lemma, it is enough to consider the case $M=1$. Since $E$ is a closed set we can find points $x_0,x_1,\ldots,x_d\in E$ such that $x_0<x_1<\cdots<x_d$ and
$$\frac{1}{d}\int_E \frac{dx}{x}=\int_{E\cap[x_j,x_{j+1}]}\frac{dx}{x}\leq \log \frac{x_{j+1}}{x_j}, \ \ \ \ \ 0\leq j \leq d-1.$$
We set $\mu=\int_E\frac{dx}{x}$ and $t=e^\frac{\mu}{d}>1$ and we have that $x_{j+1}\geq t x_j$, $0\leq j \leq d-1$. The Lagrange interpolation formula is
$$P(x)=\sum_{j=0} ^d P(x_j)\frac{(x-x_0)\cdots(\widehat{x-x_j})\cdots(x-x_d)}{(x_j-x_0)\cdots(\widehat{x_j-x_j})\cdots(x_j-x_d)}, \ x\in\re,$$
where $\hat{u}$ means that $u$ is omitted. Thus,
$$b_k=\sum_{j=0} ^d P(x_j)(-1)^{d-k}\frac{\sigma_{d-k}(x_0,\ldots,\widehat{x_j},\ldots,x_d)}{(x_j-x_0)\cdots(\widehat{x_j-x_j})\cdots(x_j-x_d)},$$
where $\sigma_l$ is the $l$-th elementary symmetric function of its variables. Therefore
\begin{eqnarray*}
|b_k|&\leq& \alpha \sum_{j=0} ^d \frac{\sigma_{d-k}(x_0,\ldots,\widehat{x_j},\ldots,x_d)}{|x_j-x_0|\cdots|\widehat{x_j-x_j}|\cdots|x_j-x_d|}\\&=&\alpha \sum_{j=0} ^d \frac{\sigma_{k}(\frac{1}{x_0},\ldots,\widehat{\frac{1}{x_j}},\ldots,\frac{1}{x_d})}{(\frac{x_j}{x_0}-1)\cdots(\frac{x_j}{x_{j-1}}-1)(1-\frac{x_j}{x_{j+1}})\cdots(1-\frac{x_j}{x_d})}\\
&\leq&\alpha \sum_{j=0} ^d \frac{\sigma_{k}(1,\ldots,\widehat{\frac{1}{t}},\ldots,\frac{1}{t^d})}{(t^j-1)\cdots(t-1)(1-\frac{1}{t})\cdots(1-\frac{1}{t^{d-j}})}.
\end{eqnarray*}
It is easy to see that there exists precisely one $j$, $0\leq j\leq\frac{d-1}{2}<d$, for which
\begin{equation}\label{eq3.1}
t^{j-1}<\frac{2t^d}{t^{d+1}+1}\leq t^j.
\end{equation}
It is exactly for this $j$ that $(t^j-1)\cdots(t-1)(1-\frac{1}{t})\cdots(1-\frac{1}{t^{d-j}})$ takes its minimum value as $j$ runs from $0$ to $d$. On the other hand we have
$$\sum_{j=0} ^d \sigma_k\bigg(1,\ldots,\widehat{\frac{1}{t_j}},\ldots,\frac{1}{t^k}\bigg)=(d+1-k)\sigma_k\bigg(1,\ldots,\frac{1}{t^d}\bigg)$$
and, hence
\begin{align}\label{eq3.2}
\nonumber|b_k|&\leq& \alpha \  (d+1-k)\sigma_k\bigg(1,\ldots,\frac{1}{t^d}\bigg)\frac{1}{(t^j-1)\cdots(t-1)(1-\frac{1}{t})\cdots(1-\frac{1}{t^{d-j}})}\\ \\
&\leq&\frac{\alpha \ (d+1-k)\binom{d+1}{k}}{1\cdot t\cdots t^k}\frac{1}{(t^j-1)\cdots(t-1)(1-\frac{1}{t})\cdots(1-\frac{1}{t^{d-j}})}.\nonumber
\end{align}
From (\ref{eq3.1}) we easily see that $t^j<2$ and, since $\frac{\log(x-1)}{x}$ is increasing in the interval $(1,2)$, we find
\begin{eqnarray}\label{eq3.3}
&&\nonumber\log(t-1)+\cdots+\log(t^j-1)\\&=&\frac{t}{t-1}\bigg(\frac{\log(t-1)}{t}(t-1)+\cdots+\frac{\log(t^j-1)}{t^j} (t^j-t^{j-1})\bigg)  \\ \nonumber&\geq& \frac{t}{t-1}\int_1 ^{t^j}\frac{\log(x-1)}{x}dx=\frac{t}{t-1}\int_0 ^{t^{j}-1}\frac{\log x}{1+x}dx.
\end{eqnarray}
Similarly, since $\frac{\log(1-x)}{x}$ is decreasing in the interval $(0,1)$ we get
\begin{eqnarray}\label{eq3.4}
&&\nonumber\log\bigg(1-\frac{1}{t^{d-j}}\bigg)+\cdots+\log\bigg(1-\frac{1}{t}\bigg)\\&=&\frac{1}{t-1}\bigg(\frac{\log(1-\frac{1}{t^{d-j}})}{\frac{1}{t^{d-j}}}\bigg(\frac{1}{t^{d-j-1}}-\frac{1}{t^{d-j}}\bigg)+\cdots+\frac{\log(1-\frac{1}{t})}{\frac{1}{t}} \bigg(1-\frac{1}{t}\bigg)\bigg) \\ \nonumber&\geq& \frac{1}{t-1}\int_{\frac{1}{t^{d-j}}} ^1\frac{\log(1-x)}{x}dx=\frac{1}{t-1}\int_0 ^{1-\frac{1}{t^{d-j}}}\frac{\log x}{1-x}dx.
\end{eqnarray}
We let 
$$A=\frac{t^d-1}{t^d+1}, \ \ B=t^j-1, \  \ \Gamma=1-\frac{1}{t^{d-j}},$$
and, obviously, $0<A, B,\Gamma<1$. From (\ref{eq3.3}) and (\ref{eq3.4}) we have
\begin{eqnarray*}
&& \log(t-1)+\cdots+\log(t^j-1)+\log\bigg(1-\frac{1}{t^{d-j}}\bigg)+\cdots+\log\bigg(1-\frac{1}{t}\bigg)\\
&\geq&\frac{t}{t-1}\int_0 ^{t^{j}-1}\frac{\log x}{1+x}dx+\frac{1}{t-1}\int_0 ^{1-\frac{1}{t^{d-j}}}\frac{\log x}{1-x}dx\\
&=&\frac{t}{t-1}\int_0 ^B\frac{\log x}{1+x}dx+\frac{1}{t-1}\int_0 ^\Gamma \frac{\log x}{1-x}dx\\
&=&-\frac{t}{t-1}B\log\frac{1}{B}-\frac{1}{t-1}\Gamma \log \frac{1}{\Gamma} - O\bigg(\frac{t}{t-1}B\bigg)-O\bigg(\frac{1}{t-1}\Gamma\bigg).
\end{eqnarray*}
From (\ref{eq3.1}) we get $B,\Gamma\leq\frac{t^{d+1}-1}{t^{d+1}+1}$ and, since
$\frac{t+1}{t-1}\frac{t^{d+1}-1}{t^{d+1}+1}$ is decreasing in $t\in(1,+\infty)$, we find
$$\frac{t}{t-1}B\leq \frac{t+1}{t-1}\frac{t^{d+1}-1}{t^{d+1}+1}\leq d+1$$
and, similarly,
$$\frac{1}{t-1}\Gamma \leq \frac{t+1}{t-1}\frac{t^{d+1}-1}{t^{d+1}+1}\leq d+1.$$
Therefore
\begin{eqnarray*}
&& \log(t-1)+\cdots+\log(t^j-1)+\log\bigg(1-\frac{1}{t^{d-j}}\bigg)+\cdots+\log\bigg(1-\frac{1}{t}\bigg) \\
&\geq& -\frac{t}{t-1}B\log\frac{1}{B}-\frac{1}{t-1}\Gamma \log \frac{1}{\Gamma} - cd \\
&\geq& -\frac{2}{t-1}A\log\frac{1}{A}-\frac{1}{t-1}\bigg(B\log\frac{1}{B}+\Gamma \log \frac{1}{\Gamma}-2A\log\frac{1}{A}\bigg)-cd.
\end{eqnarray*}
Now
\begin{eqnarray*}
B\log\frac{1}{B}+\Gamma \log \frac{1}{\Gamma}-2A\log\frac{1}{A}&=&(B+\Gamma	-2A)\log\frac{1}{A}+A\frac{B}{A}\log\frac{A}{B}+A\frac{\Gamma}{A}\log\frac{A}{\Gamma}\\&\leq&\ \bigg(\frac{B+\Gamma}{A}-2\bigg)A\log\frac{1}{A}+cA.
\end{eqnarray*}
Using (\ref{eq3.1})
$$\frac{B+\Gamma}{A}-1\leq\frac{2(t-1)}{t^{d+1}+1}$$
and we conclude that
\begin{eqnarray*}
\frac{1}{t-1}\bigg(B\log\frac{1}{B}+\Gamma \log \frac{1}{\Gamma}-2A\log\frac{1}{A}\bigg) &\leq& 
\frac{2}{t^{d+1}+1}A\log\frac{1}{A}+\frac{c}{t-1}A \\ 
&\leq& c+c\frac{t+1}{t-1}\frac{t^d-1}{t^d+1}\leq cd.
\end{eqnarray*}
Therefore
\begin{eqnarray*}
 &&\log(t-1)+\cdots+\log(t^j-1)+\log\bigg(1-\frac{1}{t^{d-j}}\bigg)+\cdots+\log\bigg(1-\frac{1}{t}\bigg)\\&\geq &
-\frac{2}{t-1}A\log\frac{1}{A} -cd
\end{eqnarray*}
and, finally, (\ref{eq3.2}) implies that for some $k>\frac{d}{2}$
$$1\leq \frac{c_o^d\alpha}{t^\frac{k(k-1)}{2}}\bigg(\frac{1}{A}\bigg)^\frac{2A}{t-1},$$
where $c_o$ is an absolute positive constant.

\noindent\textbf{case 1:} $c_o \alpha^\frac{1}{d}<\frac{1}{2}$. Then, since $\frac{2A}{t-1}\leq \frac{t+1}{t-1}A\leq d$, we get
$$A^d\leq A^\frac{2A}{t-1}\leq c_o ^d\alpha $$
which implies
$$ \frac{t^d-1}{t^d+1}=A \leq c_o \ \alpha^\frac{1}{d}$$
and, finally,
$$\mu\leq e^\mu-1=t^d-1\leq 4c_o\alpha^\frac{1}{d}.$$

\noindent\textbf{case 2:} $c_o \alpha^\frac{1}{d}\geq \frac{1}{2}$, $t^d<2$. Then 
$$1<e^\mu=t^d<4c_o\alpha^\frac{1}{d}$$
which shows that
$$\mu<\log^+(4c_o)+\frac{\log^+\alpha}{d}.$$

\noindent\textbf{case 3:} $c_o \alpha^\frac{1}{d}\geq\frac{1}{2}$, $t^d \geq 2$. Then $A\geq \frac{1}{3}$ and $\frac{2A}{t-1}\leq \frac{t+1}{t-1}A\leq d$ and, hence,
$$\frac{1}{3^d}t^\frac{k(k-1)}{2}\leq c_o^d \alpha.$$
We conclude that 
$$\mu\leq\frac{2d^2}{k(k-1)}\bigg(\log^+(3c_o)+\frac{\log^+\alpha}{d}\bigg)\leq c\bigg(1+\frac {\log^+ \alpha}{d}\bigg)$$
since $k>\frac{d}{2}$. 
%%%%%%%%%%%%%%%%%%%%%%%%%%%%%%%%%%%%%%%%%%%%%%%%%%%%%%%%%%%%%%%%%%%%%%%%%%%%%%%%%%%%%%%%%%%%%%%%%%%%%
fsection{Proof of Theorem \ref{main}}\label{s.proof}

Let $\Omega$ be a function with zero mean value on the unit sphere $S^{n-1}$ belonging to the class $L\log L(S^{n-1})$, that is
$$\|\Omega\|_{L\log L(S^{n-1})}=\int_{S^{n-1}}|\Omega(x^\prime)|(1+\log^+|\Omega(x^\prime)|)d\sigma_{n-1}(x^\prime)<\infty.$$
Set $K(x)=\Omega(x/|x|)/|x|^n$ and let $P\in\Pdn$. We will show the theorem for $d=2^m$, for some $m\geq 0$. The general case is then an immediate consequence.

We set
$$C_d=\sup_{\substack{0<\epsilon<R \\ P\in \Pdn}}\abs{\int_{\epsilon\leq |x|\leq R}e^{iP(x)}K(x)dx},$$
where $C_d$ is a constant depending on $d$, $\Omega$ and $n$.
For $0<\epsilon<R$ and $P\in\Pdn$ we write,

$$I_{\epsilon,R}(P)=\int_{\epsilon\leq |x|\leq R}e^{iP(x)}K(x)dx = \int_{S^{n-1}} \int _\epsilon ^R e^{iP(rx^\prime)}\frac{dr}{r} \Omega(x^\prime) d\sigma_{n-1}(x^\prime).$$
For $x^\prime\in S^{n-1}$, we have that $P(rx^\prime)=\sum_{j=1} ^d P_j(x^\prime) r^j$ where $P_j$ is a homogeneous polynomial of degree $j$. Observe that we can omit the constant term, without loss of generality. Set also $m_j=\|P_j\|_{L^\infty(S^{n-1})}$. Since $\epsilon$ and $R$ are arbitrary positive numbers, by a dilation in $r$ we can assume that $\max_{\frac{d}{2}<j\leq d}m_j= 1$ and, in particular, that $m_{j_o}=1$ for some $\frac{d}{2}<j_o\leq d$. We also write $Q(x)=\sum_{j=1} ^\frac{d}{2}P_j(x)$. We split the integral in two parts as follows
\begin{eqnarray*}
|I_{\epsilon,R}(P)|&\leq& \abs{\int_{S^{n-1}}\int_\epsilon ^1 e^{iP(rx^\prime)}\frac{dr}{r}\Omega(x^\prime)d\sigma_{n-1}(x^\prime)}\\&+& \abs{\int_{S^{n-1}}\int_1 ^Re^{iP(rx^\prime)}\frac{dr}{r}\Omega(x^\prime)d\sigma_{n-1}(x^\prime)}=I_1+I_2.
\end{eqnarray*}
For $I_1$ we have that
\begin{eqnarray*}
I_1 &\leq & \int_{S^{n-1}}\int_0^1\abs{ e^{iP(rx^\prime)}-e^{iQ(rx^\prime)}}\frac{dr}{r}|\Omega(x^\prime)|d\sigma_{n-1}(x^\prime) \\
&+& \abs{\int_{S^{n-1}}\int_\epsilon ^1 e^{iQ(rx^\prime)}\frac{dr}{r}\Omega(x^\prime)d\sigma_{n-1}(x^\prime)}\\
&\leq & \sum_{\frac{d}{2}<j\leq d} \frac{m_j}{j} \|\Omega\|_{L^1(S^{n-1})}+C_{\frac{d}{2}} \leq c\|\Omega\|_{L^1(S^{n-1})}+C_{\frac{d}{2}}.
\end{eqnarray*}

For $I_2$ we write
\begin{eqnarray*}
I_2&\leq&\int_{S^{n-1}}\abs{\int _{\{r\in[1,R]:|\frac{\partial P(rx^\prime)}{\partial r}|>d\}}e^{iP(rx^\prime)}\frac{dr}{r} }|\Omega(x^\prime)|d\sigma_{n-1}(x^\prime)\\&+&\int_{S^{n-1}}\int _{\{r\in[1,R]:|\frac{\partial P(rx^\prime)}{\partial r}|\leq d\}}\frac{dr}{r} |\Omega(x^\prime)|d\sigma_{n-1}(x^\prime).
\end{eqnarray*}
Since $\{r\in[1,R]:|\frac{\partial P(rx^\prime)}{\partial r}|>d\}$ consists of at most $O(d)$ intervals where $\frac{\partial P(rx^\prime)}{\partial r}$ is monotonic, a simple corollary to van der Corput's lemma for the first derivative \cite[corollary on p. 334]{S1} gives the bound
$$\int_{S^{n-1}}\abs{\int _{\{r\in[1,R]:|\frac{\partial P(rx^\prime)}{\partial r}|>d\}}e^{iP(rx^\prime)}\frac{dr}{r} }|\Omega(x^\prime)|d\sigma_{n-1}(x^\prime) \leq c \|\Omega\|_{L^1(S^{n-1})}.$$
On the other hand, the logarithmic measure lemma implies that
\begin{eqnarray*}
&&\int_{S^{n-1}}\int _{\{r\in[1,R]:|\frac{\partial P(rx^\prime)}{\partial r}|\leq d\}}\frac{dr}{r} |\Omega(x^\prime)|d\sigma_{n-1}(x^\prime) \\ &\leq& c\|\Omega\|_{L^1(S^{n-1})}+c\frac{1}{d}\int_{S^{n-1}} \log\frac{d}{\max_{\frac{d}{2}<j\leq d}\{j|P_j(x^\prime)|\}} |\Omega(x^\prime)|d\sigma_{n-1}(x^\prime).
\end{eqnarray*}
Combining the estimates we get
$$C_d\leq c\|\Omega\|_{L^1(S^{n-1})}+C_{\frac{d}{2}}+c\frac{2j_o}{d}\int_{S^{n-1}}\
\log \frac{\|P_{j_o}\|^ \frac{1}{2j_o} _{L^\infty(S^{n-1})}}{|P_{j_o}(x^\prime)|^\frac{1}{2j_o}}|\Omega(x^\prime)|d\sigma_{n-1}(x^\prime)$$
and, from  Young's inequality,
\begin{eqnarray*}
 C_d\leq c\|\Omega\|_{L^1(S^{n-1})}+C_{\frac{d}{2}}&+&c\int_{S^{n-1}}\
 \frac{\|P_{j_o}\|^ \frac{1}{2j_o} _{L^\infty(S^{n-1})}}{|P_{j_o}(x^\prime)|^\frac{1}{2j_o}}d\sigma_{n-1}(x^\prime)\\&+&c\int_{S^{n-1}}|\Omega(x^\prime)|(1+\log^+|\Omega(x^\prime)|)d\sigma_{n-1}(x^\prime).
 \end{eqnarray*}
Now, using corollary \ref{polweight} we get 
$$C_d\leq C_{\frac{d}{2}}+c(\|\Omega\|_{L\log L(S^{n-1})}+1).$$
Since $d=2^m$, this means that
$$C_{2^m}\leq C_{2^{m-1}}+c(\|\Omega\|_{L\log L(S^{n-1})}+1).$$
Using induction on $m$ we get that $C_{2^m}\leq C_1+cm(\|\Omega\|_{L\log L(S^{n-1})}+1)$. Observe that $C_1$ corresponds to some polynomial $P(x)=b_1x_1+\cdots + b_n x_n$. We write
\begin{eqnarray*}
&&\abs{\int_{\epsilon<|x|<R}e^{iP(x)}K(x)dx}=\\
&&=\abs{\int_{S^{n-1}}\int_{\epsilon} ^R \{e^{irP(x^\prime)}-e^{ir\|P\|_{L^\infty(S^{n-1})}}\}\frac{dr}{r}\Omega(x^\prime)d\sigma_{n-1}(x^\prime)}.
\end{eqnarray*}
Using the simple estimate $$\abs{\int_\epsilon ^R \{e^{iar}-e^{ibr}\}\frac{dr}{r}}\leq c+c\abs{\log\abs{\frac{b}{a}}}$$
\\
we get 
\begin{eqnarray*}
\abs{\int_{\epsilon<|x|<R}e^{iP(x)}K(x)dx}&\leq& c\|\Omega\|_{L^1(S^{n-1})}+\\&&+c\int_{S^{n-1}}\log\frac{\|P\|^ \frac{1}{2} _{L^\infty(S^{n-1})}}{|P(x^\prime)|^\frac{1}{2}}|\Omega(x^\prime)|d\sigma_{n-1}(x^\prime).
\end{eqnarray*}
Hence,  $C_1\leq c\|\Omega\|_{L^1(S^{n-1})}+c+\|\Omega\|_{L\log L(S^{n-1})}$ 
and $$C_{2^m}\leq c m(\|\Omega\|_{L\log L(S^{n-1})}+1).	$$ 
The case of general d is now trivial. If $2^{m-1}<d\leq 2^m$ then $$C_d\leq C_{2^m}\leq c m(\|\Omega\|_{L\log L(S^{n-1})}+1)\leq c \log d (\|\Omega\|_{L\log L(S^{n-1})}+1).$$
%%%%%%%%%%%%%%%%%%%%%%%%%%%%%%%%%%%%%%%%%%%%%%%%%%%%%%%%%%%%%%%%%%%%%%%%%%%%%%%%%%%%%%%%%%%%%%%%%%%%
\section{The one dimensional case revisited}\label{s.1d}
We will attempt to give a short proof of the one dimensional analogue of theorem \ref{main}. This is a slight simplification of the proof in \cite{P}, with the aid of the logarithmic measure lemma. 

So, fix a real polynomial $P(x)=b_0+b_1x+\cdots+b_dx^d$ and consider the quantity
$$C_d=\sup_{0<\epsilon<R} \abs{\int_{\epsilon<|x|<R} e^{iP(x)}\frac{dx}{x}}.$$
By the same considerations as in the $n-$dimensional case, we can assume that $P$ has no constant term and that it can be decomposed in the form
$$P(x)=\sum_{0<j\leq \frac{d}{2}} b_j x^j+\sum_{\frac{d}{2}<j\leq d} b_j x^j=Q(x)+R(x),$$
where $\max_{\frac d2<j\leq d}|b_j|=1$. As a result
\begin{eqnarray*}
\abs{\int_{\epsilon<|x|<R} e^{iP(x)}\frac{dx}{x}}&\leq& C_{\frac{d}{2}}+\int_{0<|x|<1}\frac{|R(x)|}{x}dx +\abs{\int_{1<|x|<R} e^{iP(x)}\frac{dx}{x}}\\
&\leq&C_{\frac{d}{2}}+c+I.
\end{eqnarray*}
We split $I$ as follows
\begin{eqnarray*}
I\leq \abs{\int_{\{x\in[1,R):|P^\prime(x)|>d\}}e^{iP(x)}\frac{dx}{x}}+\int_{\{x\geq1:|P^\prime(x)|\leq d\}}\frac{dx}{x}.
\end{eqnarray*}
Now, using Proposition \ref{vander} for the first summand in the above estimate and the logarithmic measure lemma to estimate the second summand, we get that $I\leq c$. But this means that $C_d\leq C_{\frac{d}{2}}+c$ which completes the proof by considering first the case $d=2^m$ for some $m$, as in the $n-$dimensional case.

%%%%%%%%%%%%%%%%%%%%%%%%%%%%%%%%%%%%%%%%%%%%%%%%%%%%%%%%%%%%%%%%%%%%%%%%%%%%%%%%%%%%%%%%%%%%%%%%%%%%%
\bibliographystyle{amsplain}

\bibliography{logarithmic_bib}

\end{document}